\begin{document}

\newtheorem{lem}{Lemma}
\newtheorem{thm}{Theorem}
\newtheorem{cor}{Corollary}
\newtheorem{exa}{Example}
\newtheorem{con}{Conjecture}
\newtheorem{rem}{Remark}
\newtheorem{obs}{Observation}
\newtheorem{definition}{Definition}
\newtheorem{pro}{Proposition}
\theoremstyle{plain}
\newcommand{\D}{\displaystyle}
\newcommand{\DF}[2]{\D\frac{#1}{#2}}

\renewcommand{\figurename}{{\bf Fig}}
\captionsetup{labelfont=bf}

\title{\bf \Large Rainbow vertex-connection and forbidden
subgraphs\footnote{Supported by NSFC No.11371205 and 11531011, and PCSIRT.}}

\author{{\small Wenjing Li, Xueliang Li, Jingshu Zhang}\\
      {\small Center for Combinatorics and LPMC}\\
      {\small Nankai University, Tianjin 300071, China}\\
       {\small liwenjing610@mail.nankai.edu.cn; lxl@nankai.edu.cn;
       jszhang@mail.nankai.edu.cn}
       }
\date{}

\maketitle
\begin{abstract}
A path in a vertex-colored graph is called \emph{vertex-rainbow} if its internal vertices
have pairwise distinct colors. A graph $G$ is \emph{rainbow vertex-connected}
if for any two distinct vertices of $G$, there is a vertex-rainbow path connecting them.
For a connected graph $G$, the \emph{rainbow vertex-connection number} of $G$, denoted by $rvc(G)$,
is defined as the minimum number of colors that are required to make $G$ rainbow vertex-connected.
In this paper, we find all the families $\mathcal{F}$ of connected graphs with $|\mathcal{F}|\in\{1,2\}$,
for which there is a constant $k_\mathcal{F}$ such that,
for every connected $\mathcal{F}$-free graph $G$, $rvc(G)\leq diam(G)+k_\mathcal{F}$,
where $diam(G)$ is the diameter of $G$.
\\[2mm]

\noindent{\bf Keywords:} vertex-rainbow path, rainbow vertex-connection, forbidden subgraphs.\\[2mm]

\noindent{\bf AMS Subject Classification 2010:} 05C15, 05C35, 05C38, 05C40.
\end{abstract}

\section{Introduction}

All graphs considered in this paper are simple, finite, undirected and connected.
We follow the terminology and notation of Bondy and Murty in \cite{Bondy} for those not
defined here.

Let $G$ be a nontrivial connected graph with an \emph{edge-coloring c}
$:E(G)\rightarrow \{0,1,\dots,t\}$, $t\in \mathbb{N}$, where adjacent edges may
be colored with the same color. A path in $G$ is called \emph{a rainbow path} if no two edges
of the path are colored with the same color. The graph $G$ is called \emph{rainbow connected}
if for any two distinct vertices of $G$, there is a rainbow path connecting them.
For a connected graph $G$, the \emph{rainbow connection number} of $G$, denoted by $rc(G)$, is defined as the
minimum number of colors that are needed to make $G$ rainbow connected.
Observe that if $G$ has $n$ vertices, then $diam(G)\leq rc(G)\leq n-1$.
And, it is easy to verify that $rc(G)=1$ if and only if $G$ is a complete graph,
and $rc(G)=n-1$ if and only if $G$ is a tree. The concept of rainbow connection of graphs
was first introduced by Chartrand et al. in \cite{Char1}, and has been well-studied since then.
For further details, we refer the reader to a book \cite{Li}.

Let $G$ be a nontrivial connected graph with a \emph{vertex-coloring c}
$:V(G)\rightarrow \{0,1,\dots,t\}$, $t\in \mathbb{N}$,
where adjacent vertices may be colored with the same color.
A path of $G$ is called {\it vertex-rainbow} if any two internal vertices of the
path have distinct colors. The graph $G$ is \emph{rainbow vertex-connected}
if any two vertices of $G$ are connected
by a vertex-rainbow path. For a connected graph $G$, the \emph{rainbow vertex-connection number}
of $G$, denoted by $rvc(G)$, is the minimum number of colors used in a vertex-coloring of $G$
to make $G$ rainbow vertex-connected.
The concept of rainbow vertex-connection of graphs
was proposed by Krivelevich and Yuster in \cite{Yuster}.
They showed that if $G$ is a connected graph with
$n$ vertices and minimum degree $\delta$,
then $rvc(G)\leq 11n/\delta$. In \cite{X. Li2}, Li and Shi improved this bound.
In \cite{X. Li3}, it was shown that computing the rainbow vertex-connection
number of a graph is NP-hard.

For the rainbow vertex-connection number of graphs, the following observations
are immediate.

\begin{pro}\label{pro1}
Let $G$ be a connected graph with $n$ vertices. Then

$(i)\ diam(G)-1\leq rvc(G)\leq n-2;$

$(ii)\ rvc(G)=diam(G)-1$ if $diam(G)=1$ or $2$, with the assumption
that complete graphs have rainbow vertex-connection number 0.
\end{pro}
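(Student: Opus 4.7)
The plan is to prove the three assertions one at a time; none of them requires any deep machinery, so the bulk of the work is simply choosing a convenient witness coloring in each case.

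For the lower bound $rvc(G)\geq diam(G)-1$ in (i), I would pick two vertices $u,v$ with $d(u,v)=diam(G)=:d$. Any $u$--$v$ path in $G$ has length at least $d$, hence at least $d-1$ internal vertices. If $G$ is rainbow vertex-connected under some coloring, then in particular the chosen vertex-rainbow $u$--$v$ path uses at least $d-1$ pairwise distinctly colored internal vertices, so at least $d-1$ colors are needed.

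For the upper bound $rvc(G)\leq n-2$ in (i), the key observation is that any spanning tree $T$ of $G$ has at least two leaves. Let $I$ be the set of non-leaf vertices of $T$, so $|I|\leq n-2$. I would color the vertices in $I$ by distinct colors drawn from $\{1,\dots,|I|\}$ and assign an arbitrary one of these colors to each leaf. For any two vertices $x,y$, the unique $x$--$y$ path $P$ in $T$ has all its internal vertices in $I$ (since the endpoints of $P$ are the only places where a leaf can sit on $P$), so $P$ is vertex-rainbow in $G$. Hence $rvc(G)\leq |I|\leq n-2$.

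For (ii), when $diam(G)=1$ the graph is complete, so $rvc(G)=0$ by the stated convention and the formula $diam(G)-1=0$ is consistent. When $diam(G)=2$, part (i) already gives $rvc(G)\geq 1$; conversely, coloring every vertex with the single color $1$ works, because any two vertices are joined by a path of length at most $2$ which has at most one internal vertex and therefore is trivially vertex-rainbow. Thus $rvc(G)=1=diam(G)-1$.

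There is no real obstacle here; the only point that warrants a moment of care is checking in the upper bound argument that the endpoints of the spanning-tree path $P$ do not accidentally count as internal vertices, which is why one must emphasize that leaves of $T$ can appear on $P$ only as endpoints.
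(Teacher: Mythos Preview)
Your argument is correct in all three parts. The paper itself does not supply a proof of this proposition at all; it introduces the statement with the phrase ``the following observations are immediate'' and moves on. Your write-up is therefore more detailed than what the authors provide, but it is exactly the standard justification one would give: the lower bound comes from counting internal vertices on a diameter-realizing path, the upper bound from coloring the internal vertices of a spanning tree, and part~(ii) from the trivial one-color coloring when the diameter is at most~$2$. The only cosmetic point worth noting is the degenerate case $n=2$, where the set $I$ of non-leaf vertices is empty and your phrase ``assign an arbitrary one of these colors to each leaf'' is vacuous; but then $rvc(G)=0=n-2$ holds trivially, so nothing is lost.
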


Note that the difference $rvc(G)-diam(G)$ can be arbitrarily large. In fact,
If $G$ is a subdivision of a star $K_{1,n}$, then we have $rvc(G)-diam(G)=
(n+1)-4=n-3$, since every internal vertex requires a single color.

In \cite{X. Li4}, Li and Liu studied the rainbow vertex-connection number
for any 2-connected graph, and determined the precise value of the
rainbow vertex-connection number of the cycle $C_n\ (n\geq 3)$.

\begin{thm}\cite{X. Li4}\label{thm1}
Let $C_n$ be a cycle of order $n\ (n\geq 3)$. Then,
\begin{displaymath}
rvc(C_n) = \left\{\begin{array}{ll}
0  & \text{if $n=3$;}\\
1  & \text{if $n=4,5$;}\\
3  & \text{if $n=9$;}\\
\lceil\frac{n}{2}\rceil -1 & \text{if $n=6,7,8,10,11,12,13$,or $15$;}\\
\lceil\frac{n}{2}\rceil  & \text{if $n\geq 16$ or $n=14$.}
\end{array}\right.
\end{displaymath}
\end{thm}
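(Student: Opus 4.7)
The argument splits into lower bounds and matching upper constructions. Let $k=\lfloor n/2\rfloor$, so $\mathrm{diam}(C_n)=k$ and Proposition~\ref{pro1}(i) gives $rvc(C_n)\ge k-1$ for free. This immediately resolves $n\in\{3,4,5\}$ via Proposition~\ref{pro1}(ii), and yields the claimed lower bound $\lceil n/2\rceil-1$ for every even $n\le 12$ as well as for $n=9$.

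To strengthen the bound by one for odd $n\ge 11$, I use the following rigidity argument. Suppose a rainbow vertex-connecting coloring $c$ uses only $k-1$ colors. Any two vertices at distance $k$ are joined by arcs of lengths $k$ and $k+1$; the longer arc has $k$ internal vertices and so cannot be vertex-rainbow, which forces the shorter arc's $k-1$ internal vertices to realize all $k-1$ colors exactly once. Rotating this observation around the cycle, every window of $k-1$ consecutive vertices is a permutation of the palette; comparing adjacent windows gives $c(v_j)=c(v_{j+k-1})$, so $c$ is periodic with period dividing $g=\gcd(k-1,n)$. For $n=2k+1$ one checks $g\mid 3$, which forces at most $3$ distinct colors to appear, hence $k-1\le 3$ and $n=9$. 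An analogous but more intricate pigeonhole and periodicity analysis pushes the bound to $\lceil n/2\rceil$ for $n=14$ and $n\ge 16$: starting from a hypothetical $(\lceil n/2\rceil-1)$-coloring one again derives a near-periodic structure, and the congruence conditions fail except in the listed exceptional small values.

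For the upper bound I exhibit explicit colorings. The generic construction assigns the palette $\{1,2,\dots,\lceil n/2\rceil-1\}$ in a nearly periodic pattern so that every arc of length $\lceil n/2\rceil$ has internally distinct colors, and then every pair of vertices is joined by a vertex-rainbow arc. The sporadic value at $n=9$ is realized by the period-$3$ coloring $1,2,3,1,2,3,1,2,3$, which the rigidity argument above in fact singles out as essentially forced. For $n=14$ and $n\ge 16$ one extra color is inserted into the generic pattern to repair the obstruction the rigidity produces. The step I expect to be the main obstacle is the rigidity analysis for even $n$: the odd-$n$ case is clean thanks to $\gcd(k-1,n)\mid 3$, but the even-$n$ analogue requires a more delicate case distinction to pin down exactly where the jump from $\lceil n/2\rceil-1$ to $\lceil n/2\rceil$ occurs and to rule out all of $\{6,7,8,10,12\}$ from it.
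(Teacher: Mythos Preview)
This theorem is quoted in the paper as a result of Li and Liu \cite{X. Li4}; the paper states it without proof and only uses it later as a tool. Consequently there is no proof in the paper to compare your proposal against.

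On its own merits: your periodicity idea for odd $n$ is correct and clean. Writing $n=2k+1$, if $k-1$ colours sufficed then every window of $k-1$ consecutive vertices would be a permutation of the palette, forcing $c(v_j)=c(v_{j+k-1})$ and hence period dividing $\gcd(k-1,2k+1)\mid 3$; this does rule out $k-1$ colours for odd $n\ge 11$ (and also for $n=7$, which you omitted from your phrasing). That handles the lower bound $\lceil n/2\rceil-1$ in the odd case.

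However, the proposal has a real gap where the hard content lies. For $n=14$ and $n\ge 16$ you need the stronger bound $rvc(C_n)\ge\lceil n/2\rceil$, and here your argument does not transfer ``analogously''. For even $n=2k$ both arcs between a diametrical pair have exactly $k-1$ internal vertices, so with $k-1$ colours you cannot force either arc to be rainbow, and the sliding-window rigidity collapses. For odd $n=2k+1\ge 17$ you must rule out $k$ colours, but then the long arc has $k$ internal vertices which \emph{can} be pairwise distinct, so again the forcing step fails. Saying the analysis is ``more intricate'' is not a proof; this is precisely the delicate part of the Li--Liu argument, and your plan gives no mechanism for it. Likewise the upper-bound constructions are only asserted to exist (``nearly periodic'', ``one extra color is inserted''), with no verification that every pair is joined by a vertex-rainbow arc in the sporadic cases $n\in\{10,11,12,13,15\}$ where the pattern is genuinely irregular.
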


Let $\mathcal{F}$ be a family of connected graphs. We say that a graph
$G$ is \emph{$\mathcal{F}$-free} if $G$ does not contain any induced subgraph
isomorphic to a graph from $\mathcal{F}$. Specifically, for $\mathcal{F}=\{X\}$
we say that $G$ is \emph{X-free}, and for $\mathcal{F}=\{X,Y\}$
we say that $G$ is \emph{(X,Y)-free}. The members of $\mathcal{F}$ will be referred to
in this context as \emph{forbidden induced subgraphs},
and for $|\mathcal{F}|=2$ we also say that $\mathcal{F}$ is a \emph{forbidden pair}.

In \cite{P.1}, Holub et al. considered the question: For which families $\mathcal{F}$
of connected graphs, a connected $\mathcal{F}$-free graph $G$ satisfies $rc(G)\leq diam(G)+k_\mathcal{F}$,
where $k_\mathcal{F}$ is a constant (depending on $\mathcal{F}$),
and they gave a complete answer for $|\mathcal{F}|\in\{1,2\}$ in the following two
results (where $N$ denotes the \emph{net}, a graph obtained by attaching a pendant edge
to each vertex of a triangle).

\begin{thm}\cite{P.1}\label{thm2}
Let $X$ be a connected graph. Then
there is a constant $k_\mathcal{F}$ such that every connected $X$-free graph $G$
satisfies $rc(G)\leq diam(G)+k_X$,
if and only if $X=P_3$.
\end{thm}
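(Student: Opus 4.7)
My plan is to prove both implications via a structural dichotomy. The forward direction ($X = P_3 \Rightarrow$ the bound holds) is immediate: in a connected $P_3$-free graph $G$, any two non-adjacent vertices $u, v$ would be joined by a shortest path $u = w_0, w_1, \ldots, w_\ell = v$ with $\ell \ge 2$; then shortest-path minimality forces $w_0 w_2 \notin E(G)$, so $w_0 w_1 w_2$ is an induced $P_3$, a contradiction. Hence $G$ is complete, giving $rc(G) = 1 = diam(G)$, and $k_{P_3} = 0$ works.

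For the converse, I would exhibit, for each connected $X \ne P_3$, an infinite family of $X$-free graphs with $rc - diam$ unbounded. The case analysis splits by the structure of $X$. If $X$ contains a cycle, or if $X = P_k$ with $k \ge 4$, then the stars $K_{1,n}$ are $X$-free (trees have no cycle, and the longest induced path in a star is $P_3$). For $n \ge 3$, $diam(K_{1,n}) = 2$, while $rc(K_{1,n}) = n$: every pair of leaves is joined by a unique two-edge path through the centre, so all $n$ edges must receive distinct colours. Hence $rc - diam = n - 2$ is unbounded.

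The remaining case is when $X$ is a tree with a vertex of degree $\ge 3$, so that $X$ contains the claw $K_{1,3}$ as an induced subgraph; then every claw-free graph is $X$-free, and the star family fails because $K_{1,n}$ contains a claw. My proposed gadget is $G_k = L(T_k)$, where $T_k$ is the spider with three legs of length $k$. Line graphs are claw-free, so each $G_k$ is $X$-free. Structurally, $G_k$ is a central triangle (formed by the three edges of $T_k$ meeting at its centre) with three pendant paths of length $k-1$ attached, one at each triangle vertex. A direct count gives $diam(G_k) = 2k-1$, and I would argue that the three farthest-leaf pairs force the three leg colour palettes to be pairwise disjoint and of size $k-1$ each; this yields $rc(G_k) = 3(k-1)$, so $rc - diam = k-2 \to \infty$.

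The main technical obstacle is the lower bound $rc(L(T_k)) \ge 3(k-1)$ in this final case. The key observation is that for any two of the three legs, the two farthest leaf endpoints are joined by a unique shortest path of length $2k-1$, which passes through the single triangle edge between them; any colour shared between those two leg palettes would break the unique rainbow route. The matching upper bound is an explicit three-palette colouring in which each triangle edge reuses a colour from the opposite leg. Verifying $X$-freeness in each subcase is routine (line graphs are claw-free, and claw-free implies $X$-free when $X \supseteq_{\mathrm{ind}} K_{1,3}$), so the heart of the proof lies in the rainbow-connection calculation for $L(T_k)$.
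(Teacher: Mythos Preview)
This theorem is quoted from \cite{P.1} and is not proved in the present paper, so there is no in-paper argument to compare against. The closest analogue actually proved here is Theorem~\ref{thm5}, whose converse is handled by intersecting two universal families ($G_1^t$ and $G_2^t$) rather than by a case split on the structure of $X$; your case-by-case construction (stars for $X$ containing a cycle or a long induced path, $L(T_k)\simeq N_{k-1,k-1,k-1}$ for $X$ containing a claw) is a perfectly good alternative, and in fact $N_{t-1,t-1,t-1}$ is exactly the gadget the paper later uses as $G_3^t$ in Proposition~\ref{pro2}.

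Your argument is correct in outline, but the lower bound $rc(L(T_k))\ge 3(k-1)$ is justified too loosely. You appeal to the ``unique shortest path'' between two leaf ends and say a shared colour would ``break the unique rainbow route'', yet rainbow connection does not require shortest paths, and there is a second simple route of length $2k$ through the other two triangle edges. What actually makes the bound work is that \emph{every} simple path between two given leaf ends must traverse all $2(k-1)$ edges of the two corresponding legs (a simple path cannot enter the third leg and return), so any colour repeated among those leg edges destroys all candidate rainbow paths simultaneously. State this explicitly and the proof is complete. A cosmetic remark: your case split tacitly assumes $|V(X)|\ge 3$; for $X\in\{P_1,P_2\}$ the hypothesis is vacuous, which is presumably the convention the original authors intend.
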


\begin{thm}\cite{P.1}\label{thm3}
Let $X, Y$ be connected graphs such that $X,Y \neq P_3$. Then
there is a constant $k_{XY}$ such that every connected $(X,Y)$-free graph $G$
satisfies $rc(G)\leq diam(G)+k_{XY}$,
if and only if (up to symmetry) either $X=K_{1,r} \ (r\geq4)$ and $Y=P_4$,
or $X=K_{1,3}$ and $Y$ is an induced subgraph of $N$.
\end{thm}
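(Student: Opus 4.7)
The plan is to prove both directions of the characterization.

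For necessity, I need to produce, for every pair $(X, Y)$ of connected graphs with $X, Y \neq P_3$ that is not in the listed two families, an infinite family of connected $(X, Y)$-free graphs $G_n$ with $rc(G_n) - diam(G_n) \to \infty$. The main engine is the observation that a tree with many leaves has large $rc$ relative to its diameter: a star $K_{1,n}$ whose every edge is subdivided $s$ times has diameter $2s$ but $rc = ns$, because every edge of a tree is a bridge and must receive its own color. To refute an arbitrary candidate pair, I would modify this base construction by attaching small gadgets (cliques, cycles, or additional pendant paths) at appropriate places, and verify $(X, Y)$-freeness by a case analysis on the combinatorial type of $X$ and $Y$: whether it is a star, a path, a tree with branching vertices, contains a triangle, has a prescribed number of pendant edges, and so on. Several of the constructions already arise in the analysis leading to Theorem~\ref{thm2}, so I would reuse and adapt them.

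For sufficiency, I would address each family in turn. Suppose first that $X = K_{1,r}$ for some $r \geq 4$ and $Y = P_4$. A $P_4$-free connected graph is a cograph, and every connected cograph that is not complete is a join, so $diam(G) \leq 2$. It therefore suffices to bound $rc(G)$ by a constant depending on $r$, which I would do by induction on the cotree: color all join edges between the two outermost factors with one fresh color, then recurse into each factor, reusing colors across the factors when possible. The $K_{1,r}$-free hypothesis bounds how broad the cotree can be and keeps the total palette at $O(r)$. Suppose next that $X = K_{1,3}$ and $Y$ is an induced subgraph of the net $N$; the extremal case is $Y = N$. I would fix a diametral path $P = v_0 v_1 \cdots v_d$ and assign its edges the colors $1, 2, \dots, d$; claw-freeness forces each neighborhood $N(v_i)$ to have independence number at most $2$, and net-freeness prevents $P$ from carrying three independent off-path attachments spread along it. A bounded palette of additional colors can then be used to extend the coloring and guarantee a rainbow path between every pair of vertices.

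The principal obstacle is the necessity direction, because the case analysis must be exhaustive: every candidate pair outside the list has to be refuted by an explicit construction, and each construction has to be checked against every induced copy of both $X$ and $Y$. The sufficiency direction is more contained, relying on familiar structural properties of cographs and of $(K_{1,3}, N)$-free graphs, but still requires careful local surgery around a diametral path in Case~2 to control the extra colors needed for off-path vertices.
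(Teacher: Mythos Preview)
This theorem is not proved in the present paper. Theorem~\ref{thm3} is quoted verbatim from \cite{P.1} (Holub, Ryj\'{a}\v{c}ek, Schiermeyer, Vr\'{a}na) as background and motivation; the paper's own contribution is the analogous characterization for the \emph{rainbow vertex-connection} number $rvc$ (Theorems~\ref{thm5} and~\ref{thm6}), not for $rc$. Consequently there is no proof here to compare your proposal against.

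That said, a brief assessment of the proposal on its own terms: what you have written is an outline, not a proof, and the necessity direction in particular is where the actual work lies. Saying you will ``modify this base construction by attaching small gadgets \dots\ and verify $(X,Y)$-freeness by a case analysis'' is precisely the hard part, and none of it is carried out. The proof in \cite{P.1} (and the parallel argument for $rvc$ in Proposition~\ref{pro2} of the present paper) proceeds by exhibiting a small, fixed list of concrete graph families---each with $rc$ far exceeding the diameter---and then arguing that any pair $(X,Y)$ must embed as induced subgraphs into \emph{every} family on the list, which forces $X$ and $Y$ into the stated forms. Your plan inverts this: you propose to start from an arbitrary non-listed pair and manufacture a witness, which would require an unbounded case analysis rather than a finite one.

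For sufficiency, your treatment of the $(K_{1,r},P_4)$ case is on the right track (connected $P_4$-free graphs have diameter at most~$2$, so one only needs $rc(G)$ bounded in terms of $r$), but the cotree-induction sketch does not yet explain why the palette stays bounded; you need to use $K_{1,r}$-freeness to bound the independence number of each side of the join. Your plan for the $(K_{1,3},N)$ case---color a diametral path and control off-path vertices via claw- and net-freeness---is indeed the standard line of attack, and resembles what the paper does in Theorem~\ref{thm8} for the $rvc$ analogue.
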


Naturally, we wonder an analogous question concerning the rainbow vertex-connection
number of graphs. In this paper, we will consider the following question.

\emph{For which families $\mathcal{F}$
of connected graphs, there is a constant $k_\mathcal{F}$ such that
a connected
graph $G$ being $\mathcal{F}$-free implies
$rvc(G)\leq diam(G)+k_\mathcal{F}$?}

We give a complete answer for $|\mathcal{F}|=1$ in Section 3,
and for $|\mathcal{F}|=2$ in Section 4.

\section{Preliminaries}

In this section, we introduce some further notations and facts that will be
needed for the proofs of our main results.

If $G$ is a graph and $A\subset V(G)$,
then $G[A]$ denotes the subgraph of $G$
induced by the vertex set $A$, and $G-A$
the graph $G[V(G)\backslash\ A]$.
An edge is called a \emph{pendant edge} if one of its end vertices has
degree one. The \emph{subdivision} of a graph $G$ is the graph obtained
from $G$ by adding a vertex of degree 2 to each edge of $G$.
For $x,y\in V(G)$, a path in $G$ from $x$ to $y$ will be referred to as an
\emph{$(x,y)$-path}, and, whenever necessary, it will be considered as oriented
from $x$ to $y$.
For a subpath of a path $P$ with origin $u$ and terminus $v$
(also referred to as a \emph{$(u,v)$-arc} of $P$),
we will use the notation $uPv$.
If $w$ is a vertex of a path with a fixed orientation,
then $w^-$ and $w^+$ denote the predecessor and successor of $w$, respectively.

For graphs $X$ and $G$, we write $X\subset G$ if $X$ is a subgraph of
$G$, $X\overset{\scriptscriptstyle{\text{IND}}}{\subset}G$
if $X$ is an induced subgraph of
$G$, and $X\simeq G$ if $X$ is isomorphic to $G$.
For two vertices $x,y\in V(G)$, we use $dist_G(x,y)$ to denote the distance
between $x$ and $y$ in $G$. The diameter of $G$ is defined as the maximum
of $dist_G(x,y)$ among all pairs of vertices $x,y$ of $G$, and will be denoted by
$diam(G)$. A shortest path joining two vertices at distance $diam(G)$
will be referred to as a \emph{diameter path}.
The \emph{distance between a vertex $u\in V(G)$ and a set $S\subset V(G)$} is defined
as $dist_G(u,S):=min_{v\in S}dist_G(u,v)$.
A set $D\subset G$ is called \emph{dominating} if every vertex in
$V(G)\setminus D$ has a neighbor in $D$.
In addition, if $G[D]$ is connected, then we call $D$
a \emph{connected dominating set}.
Throughout this paper, $\mathbb{N}$ denotes the set of all
positive integers.

For a set $S\subset V(G)$ and $k\in \mathbb{N}$, the \emph{kth-neighborhood} of $S$ is the set $N_G^k(S)$ of all vertices of $G$ at distance $k$ from $S$. In the special case $k=1$, we simply write $N_G(S)$ for $N_G^1(S)$, and if $|S|=1$ with $x\in S$, we write $N_G(x)$ for $N_G(\{x\})$. For a set $M\subset V(G)$, we set $N_M^k(S)=N_G^k(S)\cap M$ and $N_M^k(x)=N_G^k(x)\cap M$, and for a subgraph $P\subset G$, we write $N_P(x)$ for $N_{V(P)}(x)$. Finally, we will use $P_k$ to denote the path on $k$ vertices.

We end up this section with an important result that will be used in our proofs.

\begin{thm}\cite{G.}\label{thm4}
Let $G$ be a connected $P_5$-free graph. Then $G$ has a dominating clique
or a dominating $P_3$.
\end{thm}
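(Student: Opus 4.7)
The plan is to analyze a longest induced path $P = v_1 v_2 \cdots v_k$ of $G$; by $P_5$-freeness we have $k \leq 4$, so the argument splits into four cases, each ending either with a dominating clique or with an induced dominating $P_3$ drawn from $V(P)$.

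First I would dispose of the small cases. If $k \leq 2$, then $G$ has no induced $P_3$, and since $G$ is connected this forces $G$ to be complete, so $G$ itself is a dominating clique. If $k = 3$, then $P$ is itself an induced $P_3$, and I would verify that $V(P)$ dominates $G$ by the standard maximality trick: any vertex with no neighbor on $P$ would, via a shortest path to $P$, extend $P$ to a longer induced path, contradicting the choice of $k$.

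The main case is $k = 4$, with $P = v_1 v_2 v_3 v_4$, and I would handle it in two stages. First, I would show that every $u \notin V(P)$ has a neighbor on $P$: otherwise, take a shortest $u$-to-$P$ path $u = w_0, w_1, \ldots, w_\ell = v_i$, so $\ell \geq 2$ and by the shortest choice no $w_j$ with $j < \ell$ has a neighbor on $P$; then one of the five-vertex paths built from the last one or two vertices of this walk together with three consecutive vertices of $P$ is an induced $P_5$, a contradiction. Second, an off-path vertex $u$ that is adjacent on $P$ only to $v_1$ would complete the induced $P_5$ $u v_1 v_2 v_3 v_4$, and symmetrically on the $v_4$ side; hence every off-path vertex has a neighbor in $\{v_2, v_3\}$, so the induced path $v_1 v_2 v_3$ is a dominating $P_3$ (with $v_4$ covered by the edge $v_3 v_4$).

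The delicate step is the first stage of the $k=4$ analysis: in each landing position $v_i$ with $i \in \{1,2,3,4\}$, the candidate $P_5$ must genuinely be induced. The decisive input is that on a shortest walk to $P$, no interior vertex has any neighbor on $P$, which rules out all potential chords and lets the case check go through cleanly (the end positions $i \in \{1,4\}$ use only $w_{\ell-1}$, while the middle positions $i \in \{2,3\}$ require prepending an additional off-path vertex, either $w_{\ell-2}$ or $u$ itself when $\ell = 2$).
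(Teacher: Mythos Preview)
The paper does not prove this theorem; it is quoted from Bacs\'{o} and Tuza and used as a black box, so there is no proof in the paper to compare against.

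Your argument, however, has a genuine gap. The assertion that on a shortest $u$-to-$P$ walk $u=w_0,\dots,w_\ell=v_i$ ``no $w_j$ with $j<\ell$ has a neighbor on $P$'' is already wrong for $j=\ell-1$ (since $w_{\ell-1}w_\ell\in E(G)$ and $w_\ell\in V(P)$); only $j\le\ell-2$ is forced by minimality. More importantly, nothing prevents $w_{\ell-1}$ from being adjacent to \emph{several} vertices of $P$, and then none of the five-vertex paths you propose need be induced. A concrete obstruction in the $k=4$ case: take $V(G)=\{u,w,v_1,v_2,v_3,v_4\}$ with edge set $\{uw,\ wv_1,\ wv_3,\ v_1v_2,\ v_2v_3,\ v_3v_4\}$. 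One checks directly that this $G$ is $P_5$-free and that $P=v_1v_2v_3v_4$ is a longest induced path, yet $u$ has no neighbor on $P$, and neither $v_1v_2v_3$ nor $v_2v_3v_4$ dominates $G$. (The theorem still holds here --- $\{w,v_3\}$ is a dominating edge --- but your route cannot locate it.) The same phenomenon already breaks the ``maximality trick'' in your $k=3$ case: with $w$ adjacent to all of $v_1,v_2,v_3$ and $u$ adjacent only to $w$, the resulting graph is $P_4$-free but $\{v_1,v_2,v_3\}$ fails to dominate $u$. In short, a fixed longest induced path need not dominate in a $P_5$-free graph, so the overall strategy does not go through; the Bacs\'{o}--Tuza proof proceeds along different lines.
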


\section{Families with one forbidden subgraph}

In this section, we characterize all connected graphs $X$ such that every connected
$X$-free graph $G$ satisfies $rvc(G)\leq diam(G)+k_X$, where $k_X$ is a constant.

\begin{thm}\label{thm5}
Let $X$ be a connected graph. Then there is a constant $k_X$
such that every connected $X$-free graph $G$
satisfies $rvc(G)\leq diam(G)+k_X$,
if and only if $X = P_3$ or $P_4$.
\end{thm}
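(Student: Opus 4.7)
The plan is to prove both directions of the equivalence.

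For the \emph{sufficiency} direction, I observe that a $P_3$-free connected graph must be complete, so $rvc(G)=0$ and $diam(G)\leq 1$; and a $P_4$-free connected graph satisfies $diam(G)\leq 2$, since any shortest path of length $3$ would be an induced $P_4$. Proposition~\ref{pro1}(ii) then yields $rvc(G)\leq 1\leq diam(G)$. Hence $k_X=0$ suffices for $X\in\{P_3,P_4\}$.

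For the \emph{necessity} direction, for each connected graph $X\neq P_3,P_4$ I plan to exhibit a sequence of $X$-free connected graphs on which $rvc-diam$ is unbounded, splitting on whether $X$ contains a cycle. If $X$ contains a cycle, I would take $T_n$, the subdivision of $K_{1,n}$, with center $c$, subdivision vertices $s_1,\ldots,s_n$, and leaves $\ell_1,\ldots,\ell_n$. Being a tree, $T_n$ is $X$-free; it has $diam(T_n)=4$, and since each leaf-to-leaf path $\ell_i s_i c s_j \ell_j$ is unique, the internal vertices $s_1,\ldots,s_n,c$ must receive pairwise distinct colors, giving $rvc(T_n)=n+1$.

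If $X$ is a tree distinct from $P_3,P_4$, I would use the ``clique with pendants'' graph $G_n$, obtained from the clique $K_n$ on $v_1,\ldots,v_n$ by attaching one pendant $u_i$ to each $v_i$. Since every $u_i$-$u_j$ path uses $v_i$ and $v_j$ as internal vertices, the $v_i$'s require pairwise distinct colors, giving $rvc(G_n)=n$, while $diam(G_n)=3$. For $X$-freeness: every vertex of $G_n$ has at most two pairwise non-adjacent neighbors (one pendant and one clique vertex), so $G_n$ is claw-free; and because any tree with a vertex of degree $\geq 3$ contains an induced $K_{1,3}$ (take a degree-$3$ vertex together with three of its tree-neighbors), this handles every non-path tree $X$. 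For $X=P_k$ with $k\geq 5$, a short case analysis on how the vertices of an induced $P_5$ in $G_n$ could distribute between the clique and the pendants shows $G_n$ is $P_5$-free, and hence $P_k$-free.

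The main obstacle I expect is the $P_5$-free verification for $G_n$: only the pendants $u_i$ can serve as degree-$1$ vertices of an induced $P_5$, so one must check that any candidate $P_5$ with pendant endpoints $u_i,u_j$ forces the clique edge $v_iv_j$ as a chord, and also rule out $P_5$'s with one or both endpoints in the clique. The remaining arguments — the lower bound $rvc(G_n)\geq n$ and the diameter computations — are straightforward.
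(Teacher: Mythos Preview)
Your proposal is correct and follows essentially the same route as the paper. Both directions match: for sufficiency you invoke $diam(G)\le 2$ for $P_4$-free graphs and Proposition~\ref{pro1}(ii), exactly as the paper does; for necessity you use precisely the paper's two families, the subdivided star $T_n=G_1^t$ and the clique-with-pendants $G_n=G_2^t=K_t^h$. The only cosmetic difference is organizational: the paper argues that $X$ must embed as an induced subgraph in \emph{both} $G_1^t$ and $G_2^t$ and then reads off $X\overset{\scriptscriptstyle{\text{IND}}}{\subset}P_4$ from the fact that $G_1^t$ is a tree while $G_2^t$ is $K_{1,3}$-free and $P_5$-free, whereas you reach the same conclusion via an explicit case-split on whether $X$ contains a cycle, a degree-$3$ vertex, or is a long path. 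Your anticipated ``obstacle'' (the $P_5$-freeness of $G_n$) is genuinely easy: the middle vertex of an induced $P_5$ would have two non-adjacent neighbours, forcing one of them to be its pendant, which then has no further neighbour.
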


\begin{proof}
We have $diam(G)\leq 2$ since $G$ is $P_4$-free. Then it follows Proposition
\ref{pro1}  that $rvc(G)=diam(G)-1\leq 1$.

Conversely, let $t\geq k_X+5$, and
$G_1^t$ be the subdivision of $K_{1,t}$,
and let $G_2^t$ denote the graph obtained by attaching a pendant edge
to each vertex of the complete graph $K_t$ (see Fig.1).
Since $rvc(G_1^t) = t$ but $diam(G_1^t) = 4$,
$X$ is an induced subgraph of $G_1^t$.
Clearly, $rvc(G_2^t) = t$ but $diam(G_2^t) = 3$,
and $G_2^t$ is $K_{1,3}$-free and $P_5$-free.
Hence, $X$ is an induced subgraph of $P_4$.

The proof is thus complete.
\end{proof}

\begin{figure}[h,t,b,p]
\begin{center}
\scalebox{1.2}[1.2]{\includegraphics{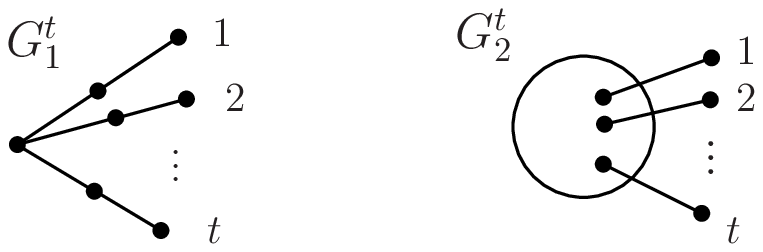}}\\[12pt]
Figure~1: The graphs $G_1^t$ and $G_2^t$.
\end{center}
\end{figure}

\section{Families with a pair of forbidden subgraphs}

For $i,j,k\in \mathbb{N}$, let $S_{i,j,k}$ denote
the graph obtained by identifying one endvertex
from each of three vertex-disjoint paths of length $i,j,k$, and
$N_{i,j,k}$ denote the graph obtained by identifying each vertex of a triangle
with an endvertex of one of three vertex-disjoint paths
of length $i,j,k$ (see Fig.2). In this context, we will also write $K_t^h$ for the graph
$G_2^t$ introduced in the proof of Theorem \ref{thm5}.

\begin{figure}[h,t,b,p]
\begin{center}
\scalebox{1}[1]{\includegraphics{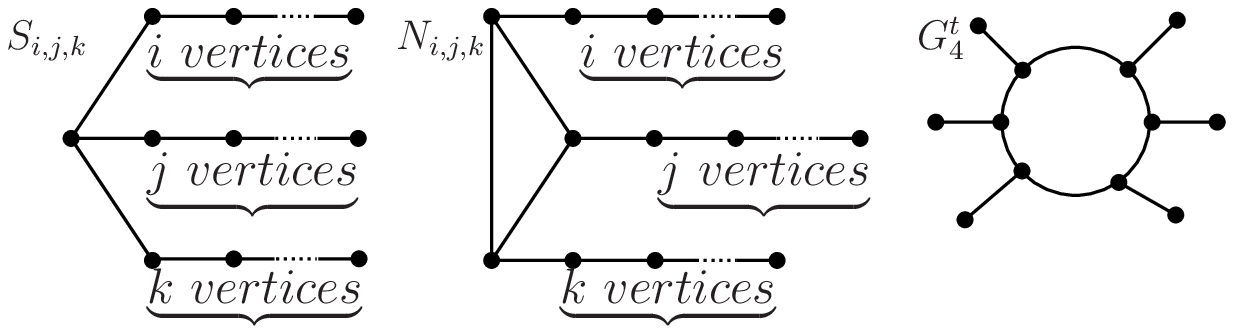}}\\[12pt]
Figure~2: The graphs $S_{i,j,k}$, $N_{i,j,k}$ and $G_4^t$.
\end{center}
\end{figure}

The following statement, which is the main result of this section,
characterizes all forbidden pairs
$X,Y$ for which there is a constant $k_{XY}$
such that $G$ being $(X,Y)$-free implies $rvc(G)\leq diam(G)+k_{XY}$.
By virtue of Theorem \ref{thm5}, we exclude the case that one of $X,Y$
is an induced subgraph of $P_4$.

\begin{thm}\label{thm6}
Let $X,Y\neq P_3$ or $P_4$ be a pair of connected graphs.
Then there is a constant $k_{XY}$
such that every connected $(X,Y)$-free graph $G$
satisfies $rvc(G)\leq diam(G)+k_{XY}$,
if and only if (up to symmetry)
$X = P_5$ and $Y\overset{\scriptscriptstyle{\text{IND}}}{\subset} K_t^h \ (t\geq 4)$, or
$X\overset{\scriptscriptstyle{\text{IND}}}{\subset} S_{1,2,2}$
and $Y \overset{\scriptscriptstyle{\text{IND}}}{\subset} N$.
\end{thm}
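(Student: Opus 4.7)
The plan is to prove both directions of the equivalence. For the necessity direction, the strategy is to exhibit, for each forbidden pair $(X,Y)$ not among the two listed families, a sequence of $(X,Y)$-free graphs whose rainbow vertex-connection numbers diverge while their diameters stay bounded. Three extremal families suffice: the subdivided star $G_1^t$ (a tree of diameter $4$ whose induced subgraphs are only subspiders with legs of length at most $2$), the graph $G_2^t=K_t^h$ (which is $(P_5,K_{1,3})$-free and has diameter $3$), and the graph $G_4^t$ pictured in Figure~2. From the proof of Theorem \ref{thm5} one already has $rvc(G_i^t)=t$ for $i=1,2$ while $diam(G_i^t)$ is bounded, and a similar verification will be carried out for $G_4^t$. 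A case analysis on which of $X,Y$ is permitted to be an induced subgraph of each of these graphs then eliminates every pair outside the statement.

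For sufficiency in the first case, $G$ is $(P_5,K_t^h)$-free, so by Theorem \ref{thm4} it possesses either a dominating clique $K$ or a dominating $P_3$. In the clique case, the set $K^{\ast}\subseteq K$ of vertices with at least one ``private'' external neighbour must satisfy $|K^{\ast}|<t$; otherwise $t$ vertices of $K^{\ast}$ together with pairwise non-adjacent private neighbours would induce a $K_t^h$. The colouring I would use assigns distinct colours to the internal vertices of a fixed diameter path, at most $t-1$ extra colours to the vertices of $K^{\ast}$, and a single common colour to the remaining vertices of $K$. Any pair of vertices of $G$ can then be joined by a path of length at most $diam(G)+2$ routed through $K$, and the resulting path is vertex-rainbow because the intermediate dominating vertices lie in $K^{\ast}$ and are individually coloured. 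The dominating $P_3$ subcase is handled analogously after adding a constant number of colours to the palette.

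For sufficiency in the second case, $G$ is $(S_{1,2,2},N)$-free, and there is no analogue of Theorem \ref{thm4} available. The argument proceeds along a fixed diameter path $P=p_0p_1\cdots p_d$, assigning pairwise distinct colours to the $d-1$ internal vertices of $P$. The task is then to absorb every off-path vertex into this palette using only $O(1)$ extra colours. The key local inputs are that $S_{1,2,2}$-freeness restricts the neighbourhood of any $v\notin V(P)$ on $P$ to a short consecutive arc (otherwise a ``T''-shape appears), while $N$-freeness bounds how many privately attached triangles can hang off $P$. Combining these, each off-path vertex can be rainbow-routed through already-coloured internal vertices of $P$ up to a bounded correction.

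The main obstacle is the second sufficiency case: without a dominating-set theorem one must construct the colouring directly along the diameter path, and the interplay between $S_{1,2,2}$-freeness and $N$-freeness has to be exploited carefully in order to limit the number of ``detour'' colours to an absolute constant. Making rigorous the claim that every off-path vertex admits a vertex-rainbow detour through only boundedly many already-used colours, without inadvertently destroying rainbow paths between other pairs of vertices, is where the bulk of the technical work will lie.
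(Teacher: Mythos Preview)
Your necessity argument has a genuine gap. With only the three families $G_1^t$ (subdivided star), $G_2^t=K_t^h$, and $G_4^t$ (cycle corona) you cannot separate the two target families in the statement. Concretely, consider the pair $X=S_{1,2,2}$, $Y=K_4^h$. This pair is \emph{not} among the admissible pairs (for the first option one needs $X=P_5$, and for the second one needs $Y\overset{\scriptscriptstyle{\text{IND}}}{\subset}N$, while $K_4^h$ contains a $K_4$). Yet each of your three test graphs contains one of $S_{1,2,2}$ or $K_4^h$ as an induced subgraph: $G_1^t$ and $G_4^t$ both contain an induced $S_{1,2,2}$, and $G_2^t=K_t^h$ contains an induced $K_4^h$ for $t\ge 4$. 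Hence your case analysis never eliminates $(S_{1,2,2},K_4^h)$. The paper closes this gap with a fourth family $G_3^t=N_{t-1,t-1,t-1}$, which has $rvc(G_3^t)\ge 3(t-1)$ but $diam(G_3^t)=2t-1$; crucially $G_3^t$ is $K_{1,3}$-free (hence $S_{1,2,2}$-free) and contains no $K_4$ (hence is $K_4^h$-free), so it forces the dichotomy: either $Y\overset{\scriptscriptstyle{\text{IND}}}{\subset}N$, or else $X\overset{\scriptscriptstyle{\text{IND}}}{\subset}G_3^t$, and since $X$ is already a tree sitting inside the subdivided star, the only option left is $X=P_5$.

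Two further remarks on sufficiency. In the $(P_5,K_t^h)$-free case your ``private neighbour'' set $K^\ast$ does not directly yield an induced $K_t^h$ when $|K^\ast|\ge t$, because the chosen private neighbours need not be pairwise non-adjacent; the paper instead builds a maximum matching between an independent set in $V(G)\setminus V(K)$ and $V(K)$, which forces both the independence of the outer endpoints and the domination property you need for the colouring. In the $(S_{1,2,2},N)$-free case your outline along the diameter path $P$ captures only half of the paper's argument: after classifying the $P$-neighbourhood into the sets $A_i,L_i,M_i,N_i$, the paper splits on whether the ``central'' block $B_c$ is a cut-set of $G$; when it is not, one obtains a short chordless dominating cycle $C$ through $v_0$ and $v_d$, and the colouring is then organised around $C$ rather than $P$. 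That dominating-cycle step is the missing structural idea in your plan.
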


The proof of Theorem \ref{thm6} will be divided into three
separate results: we prove the necessity in Proposition \ref{pro2},
and Theorems \ref{thm7} and \ref{thm8} will establish the sufficiency of the forbidden
pairs given in Theorem \ref{thm6}.

\begin{pro}\label{pro2}
Let $X,Y\neq P_3$ or $P_4$ be a pair of connected graphs
for which there is a constant $k_{XY}$
such that every connected $(X,Y)$-free graph $G$
satisfies $rvc(G)\leq diam(G)+k_{XY}$.
Then, (up to symmetry)
$X = P_5$ and $Y\overset{\scriptscriptstyle{\text{IND}}}{\subset} K_t^h\ (t\geq 4)$, or
$X\overset{\scriptscriptstyle{\text{IND}}}{\subset} S_{1,2,2}$
and $Y \overset{\scriptscriptstyle{\text{IND}}}{\subset} N$.
\end{pro}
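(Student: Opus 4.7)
The plan is to argue by contraposition: for every pair $(X,Y)$ of connected graphs with $X,Y\notin\{P_3,P_4\}$ that does not belong to either of the two admissible families, I want to exhibit a sequence of connected $(X,Y)$-free graphs $G_t$ with $rvc(G_t)-diam(G_t)\to\infty$. Two such sequences are already in view from earlier in the paper: the graph $K_t^h=G_2^t$, with $rvc-diam=t-3$, which is both $K_{1,3}$-free and $P_5$-free; and the graph $G_1^t$ (the subdivision of $K_{1,t}$), which is a tree with $rvc-diam=t-4$.

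Applying the hypothesis to $K_t^h$ forces $K_t^h$ to contain $X$ or $Y$ as an induced subgraph for every sufficiently large $t$, and since $X,Y$ are fixed and any induced subgraph of $K_t^h$ of bounded order is already an induced subgraph of $K_n^h$ for all $n$ large, I may relabel so that $Y\overset{\scriptscriptstyle{\text{IND}}}{\subset}K_t^h$ for all large $t$. Applying it to $G_1^t$ forces one of $X,Y$ to be an induced subtree of $G_1^t$. A quick analysis shows that the only connected triangle-free induced subgraphs of $K_t^h$ are the induced subgraphs of $P_4$; thus if $Y$ also sat inside some $G_1^t$ we would get $Y\overset{\scriptscriptstyle{\text{IND}}}{\subset}P_4$, contradicting $Y\notin\{P_3,P_4\}$ (and the trivial exclusions $Y\neq P_1,P_2$). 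So $X$ is the one embedding into $G_1^t$, and the connected induced subgraphs of $G_1^t$ are exactly the paths $P_k$, $k\le 5$, together with the spiders $S_{a_1,\dots,a_m}$ with every $a_i\in\{1,2\}$. This already reduces the problem to: $X$ of this restricted shape and $Y\overset{\scriptscriptstyle{\text{IND}}}{\subset}K_t^h$.

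To finish I bring in the third family $G_4^t$ pictured in Figure~2. It is to be chosen so that $rvc(G_4^t)-diam(G_4^t)\to\infty$ while every spider with four or more arms (in particular $K_{1,r}$ for $r\ge 4$) and every spider with three arms of length $2$ (in particular $S_{2,2,2}$) is excluded as an induced subgraph. Such an $X$ would then make $G_4^t$ itself a valid $(X,Y)$-free counterexample for the $Y$'s of the previous step, so the list of possible $X$'s collapses to $\{P_5,K_{1,3},S_{1,1,2},S_{1,2,2}\}$, which is exactly ``$X=P_5$ or $X\overset{\scriptscriptstyle{\text{IND}}}{\subset}S_{1,2,2}$''. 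A final separation argument, based on an auxiliary construction that realises $N$ but no larger piece of $K_t^h$ and still drives $rvc-diam$ up, then distinguishes the two branches: if $X=P_5$ one may take any $Y\overset{\scriptscriptstyle{\text{IND}}}{\subset}K_t^h$, whereas if $X\overset{\scriptscriptstyle{\text{IND}}}{\subset}S_{1,2,2}$ one is forced into $Y\overset{\scriptscriptstyle{\text{IND}}}{\subset}N$.

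The routine part is the first two applications, because the subgraph structure of $K_t^h$ and of the tree $G_1^t$ is transparent. The genuine obstacle is engineering $G_4^t$ (and the auxiliary graph used in the last step): one must simultaneously enforce growing $rvc-diam$, avoid the critical spiders as induced subgraphs, and make sure the induced subgraphs one does allow do not already lie in $K_t^h$ or in $G_1^t$. Verifying an explicit lower bound on $rvc(G_4^t)$ --- typically by producing, for each colouring with too few colours, a pair of vertices whose every connecting path is forced to repeat an internal colour --- is the combinatorial heart of the argument.
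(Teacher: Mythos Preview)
Your plan is correct and is essentially the paper's proof: you use the same four test families $G_1^t$, $G_2^t=K_t^h$, $G_4^t$, and an ``auxiliary construction'', just applied in a different order (the paper runs $G_1^t,\,G_2^t,\,G_3^t,\,G_4^t$, splitting into cases at $G_3^t$). Your unnamed auxiliary is exactly $G_3^t=N_{t-1,t-1,t-1}$: it contains $N$ but nothing larger from $K_t^h$ (its only clique is the single triangle), and it is claw-free, so it separates $X=P_5$ from $X\in\{K_{1,3},S_{1,1,2},S_{1,2,2}\}$ precisely as you describe.

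One remark on emphasis: the $rvc$ lower bounds you call the ``combinatorial heart'' are in fact trivial here. In each of $G_1^t$, $G_2^t$, $G_3^t$, $G_4^t$, every non-leaf vertex is a cut vertex separating two leaves, so any two such vertices lie as internal vertices on the \emph{unique} (or both) paths between a suitable pair of leaves and must receive distinct colours; this gives $rvc(G_i^t)\ge t$ (or $3(t-1)$ for $G_3^t$) immediately. Also note, for your use of $G_4^t$ before the auxiliary, you need $G_4^t$ to be $Y$-free for every admissible $Y\overset{\scriptscriptstyle{\text{IND}}}{\subset}K_t^h$; this follows because you already observed such a $Y$ must contain a triangle, while $G_4^t$ is triangle-free for $t\ge 4$.
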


\begin{proof}
Let $t\geq 2k_{XY}+5$, and let (see Fig.2):

$\bullet\ G_3^t = N_{t-1,t-1,t-1};$

$\bullet\ G_4^t$ be the graph obtained by attaching a pendant edge
to each vertex of a cycle $C_t$.

We will also use the graphs $G_1^t$ and $G_2^t(=K_t^h)$ shown in Fig.1.

For the graphs $G_1^t$ and $G_2^t$, we have $diam(G_1^t) = 4$
but $rvc(G_1^t) = t$, and $diam(G_2^t) = 3$
but $rvc(G_2^t) = t$, respectively.
For the graph $G_3^t$, we observe that $diam(G_3^t) = 2t-1$
while $rvc(G_3^t) \geq 3(t-1) = \frac{3}{2}(diam(G_3^t)-1)$,
since all internal vertices must have mutually
distinct colors. Analogously, for the graph $G_4^t$, we have
$diam(G_4^t) = \lfloor\frac{t}{2}\rfloor+2$,
but $rvc(G_4^t) = t \geq 2(diam(G_4^t)-2)$.
Thus, each of the graphs $G_1^t$, $G_2^t$, $G_3^t$
and $G_4^t$ must contain an induced subgraph isomorphic to one of the graphs $X,Y$.

Consider the graph $G_1^t$. Up to symmetry, we have that $X$ is an induced subgraph of
$G_1^t$ excluding $P_3$ and $P_4$.
Now we consider the graph $G_2^t$.
Obviously, $G_2^t$ is $X$-free since $G_2^t$ is $K_{1,3}$-free.
Hence, $G_2^t$ contains $Y$,
implying $Y \overset{\scriptscriptstyle{\text{IND}}}{\subset} K_t^h$ for some $t \geq 3$ (for $t \leq 2$ we get $Y\overset{\scriptscriptstyle{\text{IND}}}{\subset} P_4$,
which is excluded by the assumptions).

Now we consider the graph $G_3^t$. There are two possibilities:

$(i)\ Y\overset{\scriptscriptstyle{\text{IND}}}{\subset}G_3^t$.
Then $Y \overset{\scriptscriptstyle{\text{IND}}}{\subset} N$. Now we consider the graph $G_4^t$.
$G_4^t$ is $N$-free, so we get $X \overset{\scriptscriptstyle{\text{IND}}}{\subset} S_{1,2,2}$.

$(ii)\ X\overset{\scriptscriptstyle{\text{IND}}}{\subset}G_3^t$.
Then $X = P_5$. As the case $X = P_5$ and $Y = N$
is already covered by case $(i)$, we have that
$X = P_5$ and $Y \overset{\scriptscriptstyle{\text{IND}}}{\subset} K_t^h,\ t\geq 4$.

This completes the proof.
\end{proof}

It is easy to observe that if $X\overset{\scriptscriptstyle{\text{IND}}}{\subset}
X'$, then every $(X,Y)$-free graph is also $(X',Y)$-free.
Thus, when proving the sufficiency of Theorem
\ref{thm6},
we will be always interested in \emph{maximal pairs} of forbidden subgraphs,
i.e., pairs $X,Y$ such that, if replacing one of $X,Y$, say $X$, with
a graph $X'\neq X$ such that $X\overset{\scriptscriptstyle{\text{IND}}}{\subset}
X'$, then the statement under consideration is not true for $(X',Y)$-free graphs.

\begin{thm}\label{thm7}
Let $G$ be a connected $(P_5,K_t^h)$-free graph for some $t\geq 4$.
Then, $rvc(G)\leq diam(G)+t$.
\end{thm}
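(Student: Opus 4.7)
The plan is to invoke Theorem~\ref{thm4}: since $G$ is $P_5$-free, $G$ contains either a dominating induced $P_3$ or a dominating clique. In the dominating $P_3$ case, I would color the three vertices of the dominating $P_3$ with three distinct colors and every other vertex with a single reused color, yielding $rvc(G)\le 3\le diam(G)+t$, since any pair of vertices is connected by a path whose internal vertices lie on a subpath of the dominating $P_3$.

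In the dominating clique case I would pass to an inclusion-minimal dominating clique $K=\{v_1,\ldots,v_k\}$. Then $diam(G)\le 3$, and either $k=1$ (giving a universal vertex, so $rvc(G)\le 1$ by Proposition~\ref{pro1}), or each $v_i$ has a private external neighbor $u_i$ with $N(u_i)\cap K=\{v_i\}$. I would then study the auxiliary graph $H$ on $\{u_1,\ldots,u_k\}$ inherited from $G$. Two structural facts should emerge: (i) $\alpha(H)\le t-1$, for otherwise $t$ pairwise non-adjacent $u_i$'s together with the corresponding $v_i$'s induce a forbidden $K_t^h$; and (ii) non-adjacency in $H$ is an equivalence relation, so $H$ is complete multipartite. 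Property (ii) follows from $P_5$-freeness: for a non-edge $u_iu_j$ of $H$, any $u_\ell$ adjacent in $H$ to exactly one of $u_i,u_j$ would produce the induced path $u_\ell-u_i-v_i-v_j-u_j$, a forbidden $P_5$ (all chords being ruled out by the privacy of $u_i, u_j, u_\ell$ in $K$), so $N_H(u_i)=N_H(u_j)$; transitivity of non-adjacency then follows at once.

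Combining (i) and (ii), $H$ is complete multipartite with parts of size at most $t-1$. The coloring strategy is then to assign the clique $K$ at most $t-1$ colors, injectively within each part of $H$ so that privates $u_i,u_j$ in a common part yield a rainbow path $u_i v_i v_j u_j$, together with one further color applied to every external vertex and, if non-empty, to $K\setminus I$, where $I=\{v_i : u_i \text{ exists}\}$. A case analysis on pairs of external vertices then gives $rvc(G)\le t\le diam(G)+t$.

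The main obstacle is to coordinate the injective labelings across distinct parts of $H$ so that the coloring also serves pairs of external vertices $x,y$ whose clique-neighborhoods $T(x)=N(x)\cap K$ and $T(y)=N(y)\cap K$ span several parts. One must check that for every such pair the coloring provides a choice $v_i\in T(x), v_j\in T(y)$ with $c(v_i)\ne c(v_j)$, which requires a careful case analysis according to how $T(x),T(y)$ meet the parts of $H$ and the set $K\setminus I$. This bookkeeping is where I expect most of the remaining technical work to lie.
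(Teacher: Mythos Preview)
Your handling of the dominating $P_3$ case matches the paper's exactly. In the dominating clique case, however, the paper takes a much shorter route that sidesteps the obstacle you flag at the end.

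Instead of passing to a minimal dominating clique and analyzing private neighbours, the paper works with an arbitrary dominating clique $K_p$, sets $W=V(G)\setminus V(K_p)$ and $H=G\setminus E(K_p)$, and chooses an independent set $A\subset W$ together with $B\subset V(K_p)$ so that $H[A\cup B]$ is an induced matching $\ell K_2$ with $\ell$ \emph{maximum}. Two one-line observations finish the job: $\ell<t$, since otherwise $G[A\cup B]\simeq K_\ell^h$ would contain $K_t^h$; and every $x\in W\setminus A$ has a neighbour in $A\cup B$, since otherwise $x$ together with one of its clique neighbours would extend the matching. The coloring then uses colors $1,\dots,\ell$ on $B$, a single color $\ell+1$ on $A$, a single color $\ell+2$ on $V(K_p)\setminus B$, and anything on the rest; for any $x_1,x_2\in W\setminus A$ one routes $x_1$ through $A\cup B$ and $x_2$ through $K_p$, and the three color classes involved are automatically distinct. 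This gives $rvc(G)\le \ell+2\le t+1\le diam(G)+t$ with no coordination problem at all.

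Your structural claims are correct --- the $P_5$ you exhibit is indeed induced, and the complete multipartite description of $H$ with parts of size at most $t-1$ follows --- but the coloring you propose is keyed to the private neighbours $u_i$, and there is no obvious reason why an arbitrary external vertex $x$ with small $T(x)$ should see two differently-colored clique vertices, or see one of the $u_i$ directly; exploiting $P_5$-freeness further to rule out bad configurations would require exactly the case analysis you anticipate. The paper's matching trick avoids this by ensuring \emph{every} external vertex outside $A$ already has a neighbour in the colored set $A\cup B$, which is what makes the routing uniform and the bookkeeping disappear.
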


\begin{proof}
From Theorem \ref{thm1}, we have that $G$ has a dominating clique or a dominating
$P_3$.

{\bf Case 1:} $G$ has a dominating $P_3$.

We color the vertices of $P_3$
with colors $1,2,3$ and color the remaining vertices arbitrarily (e.g.,
all of them with color 1). One can easily check that this vertex-coloring
can make $G$ rainbow vertex-connected. So, in this case, $rvc(G) \leq 3\leq diam(G)+t$.

{\bf Case 2:} $G$ has a dominating clique, denoted by $K_p$.

Set $W=V(G)\backslash V(K_p)$, $H=G\backslash E(K_p)$.
Let $A$ be an independent set in $G[W]$ and $B\subset V(K_p)$
such that $H[A\cup B]=\ell K_2$ (that is, a matching of order $\ell$)
and $\ell$ is maximal. Then $\ell <t$,
for otherwise, $G[A\cup B]$ contains an induced $K_t^h$.
Moreover, for $x\in W\backslash A$, $N_{A\cup B}(x)\neq \emptyset$,
since $\ell$ is maximal.
Now we demonstrate a rainbow vertex-coloring of $G$.
Use colors $1,2, \dots, \ell$ to color each vertex in $B$,
color the vertices of $A$ with color $\ell+1$, the vertices of
$V(K_p)\backslash B$ with color $\ell+2$,
and color the remaining vertices arbitrarily.
Thus, pairs in $(A\cup V(K_p))\times (A\cup V(K_p))$ and
$(A\cup V(K_p))\times (W\backslash A)$ are rainbow vertex-connected.
As for $x_1,x_2\in W\backslash A$, let $y_1\in N_{A\cup B}(x_1)$,
$y_2\in N_{K_p}(x_2)$. Then, there is a rainbow $(x_1,x_2)$-path containing $y_1$ and $y_2$.
So, $rvc(G)\leq \ell+2\leq t+1\leq diam(G)+t$.

The proof is complete.  \end{proof}

Now let $G$ be an $(S_{1,2,2},N)$-free graph, let $x, y\in V(G)$,
and let $P:x = v_0, v_1, \dots, v_k = y \ (k\geq 3)$
be a shortest $(x,y)$-path in $G$.
Let $z\in V(G)\backslash V(P)$. If $|N_P(z)|\geq 2$
and $\{v_i,v_j\}\subset N_P(z)$, then
$|i-j| \leq 2$ and $|N_P(z)|\leq 3,$ since $P$ is a shortest path.
Moreover, the following facts are easily observed.

$\bullet$ If $|N_P(z)|=1$, then, since $G$ is $S_{1,2,2}$-free,
$z$ is adjacent to $x, v_1, v_{k-1}$ or to $y$.

$\bullet$ If $|N_P(z)|=3$, then the vertices of $N_P(z)$ must be
consecutive on $P$, since $P$ is a shortest path.

This motivates the following notations:

$\bullet\ A_i:=\{ z\in V(G)\backslash V(P)| N_P(z) = \{v_i\}\}$
for $i = 0,1, k-1, k$;

$\bullet\ L_i:=\{ z\in V(G)\backslash V(P)| N_P(z) = \{v_{i-1}, v_{i+1}\}\}$
for $1\leq i \leq k-1$;

$\bullet\ M_i:=\{ z\in V(G)\backslash V(P)| N_P(z) = \{v_{i-1}, v_i\}\}$
for $1\leq i \leq k$;

$\bullet\ N_i:=\{ z\in V(G)\backslash V(P)| N_P(z) = \{v_{i-1}, v_i, v_{i+1}\}\}$
for $1\leq i \leq k-1$.

We further set $S=V(P)\cup N_G(P)$ and $R=V(G)\backslash S$.

\begin{lem}\label{lem1}
Let $G$ be an $(S_{1,2,2},N)$-free graph, let $x,y\in V(G)$
be such that $dist_G(x,y)\geq 4$
and let $P: x = v_0, v_1, \dots, v_k = y$,
be a shortest $(x,y)$-path in $G$. Then

$(i)\ N_G(M_i) \subset S, \ i=2,\dots, k-1$;

$(ii)\ N_G(N_i) \subset S, \ i=2,\dots, k-2$;

$(iii)\ N_G(L_i) \subset S, \ i=1,\dots, k-1$;

$(iv)\ N_P(R) = \emptyset$;

$(v)\ N_S(R)\subset A_0\cup M_1\cup N_1\cup N_{k-1}\cup M_k\cup A_k$.
\end{lem}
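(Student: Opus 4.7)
Part (iv) is immediate from the definitions: since $R=V(G)\setminus(V(P)\cup N_G(P))$, no vertex of $R$ has a neighbor on $P$, giving $N_P(R)=\emptyset$. For (i)--(iii), the strategy is a uniform contradiction argument: given $z\in M_i\cup N_i\cup L_i$ and a putative neighbor $w$ of $z$ lying outside $S$ (so $w\notin V(P)$ and, crucially, $w$ has no neighbor on $P$), exhibit an induced net $N$ or an induced $S_{1,2,2}$ on six carefully chosen vertices of $V(P)\cup\{z,w\}$, contradicting the $(S_{1,2,2},N)$-freeness of $G$.

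For (i), take the set $\{v_{i-2},v_{i-1},v_i,v_{i+1},z,w\}$: the triangle $zv_{i-1}v_i$, available because $z\in M_i$ and $v_{i-1}v_i\in E(P)$, carries the pendants $v_{i-2}$ at $v_{i-1}$, $v_{i+1}$ at $v_i$, and $w$ at $z$, and the absence of extra edges uses only that $P$ is shortest, that $N_P(z)=\{v_{i-1},v_i\}$, and that $w\notin N_G(P)$; this is an induced net. The range $2\le i\le k-1$ is exactly what keeps $v_{i-2},v_{i+1}\in V(P)$. The net approach fails for (ii) because, with $z\in N_i$, neither $v_{i-1}$ nor $v_{i+1}$ can serve as a pendant at $v_i$; instead $\{v_{i-2},v_{i-1},z,v_{i+1},v_{i+2},w\}$ induces $S_{1,2,2}$ centered at $z$, with leaf $w$ and two length-$2$ arms through $v_{i-1},v_{i-2}$ and through $v_{i+1},v_{i+2}$. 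The critical non-edge is $v_{i-1}v_{i+1}$ (distance $2$ on $P$), and $v_i$ is omitted deliberately. The generic case $2\le i\le k-2$ of (iii) is identical.

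The delicate part of (iii) is the boundary $i\in\{1,k-1\}$, where the $z$-centered construction breaks because $v_{-1}$ (resp.\ $v_{k+1}$) does not exist. My plan for $i=1$ is to re-anchor the spider at a path vertex: the set $\{w,z,v_1,v_2,v_3,v_4\}$ induces $S_{1,2,2}$ centered at $v_2$, with leaf $v_1$ and length-$2$ arms through $z,w$ and through $v_3,v_4$. The vertex $v_4$ exists precisely because $\mathrm{dist}_G(x,y)\ge 4$, which is where the diameter hypothesis of the lemma is finally used; the case $i=k-1$ is symmetric. Finally, (v) follows by combining (iv) with (i)--(iii): any $w\in N_S(R)$ lies in $N_G(P)\setminus V(P)$, hence in some $A_j$, $L_j$, $M_j$, or $N_j$, and (i)--(iii) eliminate all $L_j$ and all $M_j,N_j$ except at the boundary indices. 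Only $A_1$ and $A_{k-1}$ still need to be ruled out, and the same re-anchoring trick handles this: if $w\in A_1$ had a neighbor $u\in R$, then $\{u,w,v_0,v_1,v_2,v_3\}$ would induce $S_{1,2,2}$ centered at $v_1$ with arms $v_1v_0$, $v_1,w,u$, and $v_1,v_2,v_3$, a contradiction. The main obstacle throughout is exactly this boundary behavior: the natural $z$-centered spider stops fitting at the ends of $P$ and must be re-anchored at a path vertex, a maneuver that relies on $\mathrm{dist}_G(x,y)\ge 4$.
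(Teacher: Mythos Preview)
Your proposal is correct and follows essentially the same approach as the paper's proof: the same six-vertex configurations are used in each case (the net on $\{v_{i-2},v_{i-1},v_i,v_{i+1},z,w\}$ for (i), the $S_{1,2,2}$ centered at $z$ with arms through $v_{i-1},v_{i-2}$ and $v_{i+1},v_{i+2}$ for (ii) and the interior of (iii), the re-anchored $S_{1,2,2}$ on $\{w,z,v_1,v_2,v_3,v_4\}$ for the boundary of (iii), and the $S_{1,2,2}$ on $\{u,w,v_0,v_1,v_2,v_3\}$ to eliminate $A_1$ in (v)), with only a harmless relabeling of the roles of $z$ and $w$ relative to the paper.
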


\begin{proof}
If $zv\in E(G)$ for some $z\in R$ and $v\in M_i,\ 2\leq i\leq k-1$,
then we have $G[\{v_{i-2},v_{i-1},v_i,$ $v_{i+1},v,z\}]\simeq N$, a contradiction.
Hence, $(i)$ follows.
To show $(ii)$, we observe that if $zv\in E(G)$ for some $z\in R$ and
$v\in N_i,\ 2\leq i\leq k-2$,
then we have $G[\{v_{i-2},v_{i-1},v_{i+1},v_{i+2},v,z\}]\simeq S_{1,2,2}$, a contradiction.
Similarly, if $zv\in E(G)$ for some $z\in R$ and
$v\in L_i,\ 1\leq i\leq k-1$,
then, for $i=1$ we have
$G[\{v_1,v_2,v_3,v_4,v,z\}]\simeq S_{1,2,2}$,
for $ 2\leq i\leq k-2$ we have
$G[\{ z,v,v_{i-1},v_{i-2},v_{i+1},v_{i+2}\}]\simeq S_{1,2,2}$,
and for $i=k-1$,
$ G[\{ v_{k-1},v_{k-2},v_{k-3},v_{k-4},v,z\}]\simeq S_{1,2,2}$,
a contradiction.
Part $(iv)$ follows immediately from the definition of $R$,
and by $(i)$ through $(iii)$, we have
$N_S(R)\subset A_0\cup A_1\cup M_1\cup N_1\cup N_{k-1}\cup M_k\cup A_{k-1}\cup A_k$.
But if $zv\in E(G)$ for some $z\in R$ and $v\in A_1$,
then $G[\{v_0,v_1,v_2,v_3,v,z\}]\simeq S_{1,2,2}$, a contradiction.
Similarly, we have $N_{A_{k-1}}(R) = \emptyset$, implying $(v)$.

The proof is complete.
\end{proof}

\begin{thm}\label{thm8}
Let $G$ be a connected $(S_{1,2,2},N)$-free graph.
Then, $rvc(G)\leq diam(G)+11$.
\end{thm}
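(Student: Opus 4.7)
The plan is to produce, for each connected $(S_{1,2,2},N)$-free graph $G$, a connected dominating set $D\subseteq V(G)$ with $|D|\le \operatorname{diam}(G)+11$. Assigning pairwise distinct colours to the vertices of $D$ (and an arbitrary colour to the remaining vertices) then makes $G$ rainbow vertex-connected, because any two vertices are joined by a path whose internal vertices lie in $D$, and those internal vertices carry distinct colours. This standard reduction turns Theorem \ref{thm8} into the combinatorial task of producing the small connected dominating set.

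When $\operatorname{diam}(G)\le 2$, Proposition \ref{pro1} already gives $rvc(G)\le 1$. When $\operatorname{diam}(G)=3$ a short ad hoc argument handles the case by taking a diameter path together with a constant-sized dominator of the remaining vertices. So from now on I would assume $k:=\operatorname{diam}(G)\ge 4$, fix a shortest $(x,y)$-path $P:v_0,v_1,\ldots,v_k$ realizing the diameter, and adopt the notation $S,R,A_i,L_i,M_i,N_i$ of Lemma \ref{lem1}. Since $V(P)$ already dominates $S$, the only task is to dominate $R$ by a bounded number of extra vertices that are themselves adjacent to $V(P)$, so that the resulting $D=V(P)\cup (\text{extras})$ remains connected.

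By Lemma \ref{lem1}(v), one can split $R=R^{(0)}\cup R^{(k)}$, where $R^{(0)}$ (resp.\ $R^{(k)}$) consists of those vertices of $R$ having a neighbour in $A_0\cup M_1\cup N_1$ (resp.\ $N_{k-1}\cup M_k\cup A_k$). The central structural claim, symmetric in the two ends of $P$, is that \emph{$R^{(0)}$ can be dominated by at most five vertices of $A_0\cup M_1\cup N_1$}; granting this and the analogous statement for $R^{(k)}$, the set $D:=V(P)\cup D^{(0)}\cup D^{(k)}$ with $|D^{(0)}|,|D^{(k)}|\le 5$ is a connected dominating set of size at most $(k+1)+10=\operatorname{diam}(G)+11$, and Theorem \ref{thm8} follows.

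The bulk of the work, and the main obstacle, is proving that structural claim. The approach is by contradiction: if $R^{(0)}$ admits no $5$-vertex dominator in $A_0\cup M_1\cup N_1$, a greedy selection yields six pairs $(z_i,w_i)$ with $z_i\in R^{(0)}$, $w_i\in N(z_i)\cap (A_0\cup M_1\cup N_1)$, and $w_iz_j\notin E(G)$ for $i\ne j$. I would classify the $w_i$'s according to which of $A_0,M_1,N_1$ they lie in, use pigeonhole to find three pairs of the same type, and then inspect the $6$-vertex configurations $\{v_0,v_1,w_i,w_j,z_i,z_j\}$ (sometimes also bringing in $v_2$). In the subcase $w_iw_j\notin E(G)$ the set $\{v_0,v_1,w_i,z_i,w_j,z_j\}$ tends to induce $S_{1,2,2}$ centred at $v_0$ with arms $v_0v_1$, $v_0w_iz_i$, $v_0w_jz_j$; in the subcase $w_iw_j\in E(G)$, the same vertex set tends to induce a net $N$ on the triangle $v_0w_iw_j$ with pendants $v_1,z_i,z_j$. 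The delicate cases are when simultaneously $w_iw_j\in E(G)$ and $z_iz_j\in E(G)$, or when some $w_i\in M_1\cup N_1$ contributes extra edges to $v_1$ or $v_2$ that break the above obstructions; there one has to exploit the third (or fourth) pair and use $v_2$ together with the shortness of $P$ to still force an induced $S_{1,2,2}$ or $N$. Calibrating the constant to be exactly $5$ on each side, so that the final bound reads $\operatorname{diam}(G)+11$ rather than a larger constant, is the most intricate bookkeeping step of the whole argument.
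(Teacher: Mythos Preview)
Your reduction to finding a connected dominating set of size at most $\operatorname{diam}(G)+11$ is not merely incomplete---it is impossible. Take $G=C_{2d}$ for $d\ge 14$: this graph is $(S_{1,2,2},N)$-free (every induced subgraph has maximum degree $2$ and is triangle-free), has diameter $d$, yet its smallest connected dominating set has $2d-2$ vertices, which exceeds $d+11$. So no argument along the lines you propose can yield the theorem.

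The concrete error is your reading of Lemma~\ref{lem1}(v). That lemma only says that \emph{if} a vertex of $R$ has a neighbour in $S$, then that neighbour lies in $A_0\cup M_1\cup N_1\cup N_{k-1}\cup M_k\cup A_k$; it does not assert that every vertex of $R$ has a neighbour in $S$. In the cycle $C_{2d}$ with $P=v_0v_1\cdots v_d$, most of $R$ lies at distance much greater than $1$ from $S$, so your decomposition $R=R^{(0)}\cup R^{(k)}$ simply fails. The paper handles exactly this phenomenon by splitting the case $d\ge 5$ according to whether the ``middle'' set $B_c$ separates the graph. When it does, one can indeed show $V(G)=S\cup N_G(S)$ and finish with a colouring close to yours. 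When it does not, the paper abandons the diameter path altogether: it builds an induced dominating cycle $C$ of length between $2d-2$ and $2d+2$, and then colours $C$ with roughly $d+1$ colours by repeating the pattern on antipodal vertices (so the dominating set has about $2d$ vertices but needs only about $d$ colours). This reuse of colours on the cycle is the key idea your approach is missing.
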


\begin{proof}
Let $G$ be a connected $(S_{1,2,2},N)$-free graph.
If $diam(G)\leq 2$, then $rvc(G)=diam(G)-1$.
Thus, for the rest of the proof we suppose that $diam(G) = d \geq 3$.
Let $v_0,v_d\in V(G)$ be such that $dist_G(v_0,v_d)=d$,
let $P:v_0v_1v_2\dots v_d$ be a diameter path in $G$,
and let $A_i, L_i, M_i, N_i, S, R$ be defined as above.

We distinguish three cases according to the value of $d$.

{\bf Case 1:} $d=3.$

First, we partition $V(G)$ into four parts
$P,N_G^1(P), N_G^2(P)$
and $N_G^3(P)$ according to the distance
from $P$. Then, for the vertices in $N_G^1(P)$,
we can partition them into three parts
$X_1=A_0\cup M_1\cup L_1\cup N_1$,
$X_2=A_3\cup M_3\cup L_2\cup N_2$
and $X_3=A_1\cup M_2 \cup A_2$.
We must point out that $X_1\cap X_2=\emptyset$
and $N_R(X_3)=\emptyset$,
whose proof is similar to that of Lemma \ref{lem1}.
Then, we denote $Y_i$ the set of vertices
in $N_G^2(P)$ such that for each $v\in Y_i$,
$N_{N(P)}(v)\subset X_i,i=1,2$,
and $Y_3=N_G^2(P)\backslash (Y_1\cup Y_2)$.
And with a similar reason as above,
$N_{N_G^3(P)}(Y_3)=\emptyset$.
So, analogously we can partition $N_G^3(P)$
into three parts $Z_1,Z_2$ and $Z_3$.
Since for a vertex $z\in Z_1$, $dist_G(z,v_3)\geq 4$,
it follows that $Z_1=\emptyset$, a contradiction.
Symmetrically, we have $Z_2=\emptyset$.

Now, we demonstrate a rainbow vertex-coloring of $G$
that uses at most $14$ colors.
Color the vertices of $P$ with colors $0,1,2,3$
and color the vertices in $A_0, M_1, L_1, N_1, N_2,$ $L_2,M_3,A_3, Y_1$ and $Y_2$ with colors $4,5,\dots,13$, respectively. Then, color the remaining vertices arbitrarily (e.g., all
of them with color 0). We can show that this vertex-coloring can make $G$ rainbow vertex-connected.
We only need to verify that for a pair $(x,y)\in (Y_1\times Y_1)\cup (Y_2\times Y_2)$, there exists a rainbow path connecting them. Without loss of generality,
we suppose $(x,y) \in Y_1\times Y_1$.
If $dist_G(x,y)\leq 2$, then there is nothing left to do.
Next we consider $dist_G(x,y)\geq 3$.
Let $x'$ be an arbitrary neighbor of $x$ in $X_1$,
and $y'$ an arbitrary neighbor of $y$ in $X_1$.
We claim that $x'$ and $y'$ cannot have the same color.
Otherwise, we suppose that $x'$ and $y'$ are colored with the same color,
i.e., they are in the same vertex-class of $X_1$,
and let $i=max\{j:v_j\in N_P(x')\cap N_P(y')\}$.
Then, we have $G[\{v_i,v_{i+1},x',y',x,y\}]\simeq S_{1,2,2}$
if $x'y'\notin E(G)$, or
$G[\{v_i,v_{i+1},x',y',x,y\}]\simeq N$ if $x'y'\in E(G)$,
respectively. So, the colors of $x'$ and $y'$
must be different. Then, the $(x,y)$-path $P_1:xx'v_0y'y$ is vertex-rainbow.
Hence, we have $rvc(G)\leq diam(G)+11$.

{\bf Case 2:} $d= 4.$

Similarly, with the partition and the vertex-coloring
of Case 1, we have $rvc(G)\leq 15=diam(G)+11$.

{\bf Case 3:} $d\geq 5.$

Set $B_c= (\cup_{i=2}^{d-2}N_i)\cup(\cup_{i=2}^{d-1}M_i)
\cup(\cup_{i=1}^{d-1}L_i)\cup A_1\cup A_{d-1}\cup\{v_1,v_2,\dots, v_{d-1}\}$,
$X= A_0\cup M_1\cup N_1\cup N_{d-1}\cup M_d\cup A_d$,
$X_1=A_0\cup M_1\cup N_1$,
and $X_2=N_{d-1}\cup M_d\cup A_d$.
By virtue of Lemma \ref{lem1}, we have $N_G(B_c)\subset S$.

{\bf Subcase 3.1:} $B_c$ is a cut-set of $G$.

We claim that $S\cup N_G(S)=V(G)$. Suppose, to the contrary, that $z\in R$
is at distance 2 from $S$. Then, by Lemma \ref{lem1} and the assumption
of Case 1, as well as the symmetry, we can assume that $N_S^2(z)\subset X_1$.
Let $Q$ be a shortest $(z,v_d)$-path, let $w$
be the first vertex of $Q$ in $B_c$ (it exists
by the assumption of Subcase 3.1),
and let $w^-$ be the predecessor of $w$ on $Q$.
By Lemma \ref{lem1}, $dist(w^-,P)=1$, implying $w^-\in X_1$.
Then, $dist_G(w^-,v_d)\geq d-1$;
otherwise, the path $v_0w^-Qv_d$ is a $(v_0,v_d)$-path
shorter than $P$. Since $dist_G(z,w^-)\geq 2$,
we have $dist_G(z,v_d)\geq d+1$, contradicting $diam(G) = d$.
Hence, we have $S\cup N_G(S)=V(G)$. Moreover, with a similar
argument to that of Case 1, we have that for $x,y\in R$ with distance at least 3,
their neighbors $x'$ and $y'$ cannot be in the same vertex-class of $X$.

Now we demonstrate a rainbow vertex-coloring of $G$
that uses at most $d+7$ colors.
Color the vertices of $P$ with colors $0,1,\dots, d$
and color the vertices in $A_0, M_1, N_1, N_{d-1},$ $M_d$
and $A_d$ with colors $d+1,d+2,\dots, d+6$, respectively.
Then, color the remaining vertices arbitrarily (e.g., all
of them with color 0). We can show that this vertex-coloring
can make $G$ rainbow vertex-connected.
For any pair of vertices in $S\times (S\cup R)$,
we can easily find a rainbow path connecting them.
For a pair $(x,y) \in R\times R$,
if $dist_G(x,y)\leq 2$, then there is nothing left to do.
Next we consider $dist_G(x,y)\geq 3$. From above,
we know that their neighbors $x'$ and $y'$ in $X$ are colored differently.
So, the $(x,y)$-path containing $x'$ and $y'$ is rainbow.

Consequently, we have $rvc(G)\leq diam(G)+7$.

{\bf Subcase 3.2:} $B_c$ is not a cut-set of $G$.

Set $H=G-B_c$. Let $P':v_dv_{d+1}\dots v_{d+\ell-1}v_{d+\ell}=v_0$
be a shortest $(v_d,v_0)$-path in $H$.
Since $P$ is a diameter path, $\ell\geq d\geq 5.$
If $v_{d+1}$ is adjacent to $v_{d-2}$,
then $G[\{v_d,v_{d+1},v_{d-2},v_{d-3},v_{d+2},$ $v_{d+3}\}]\simeq S_{1,2,2}$,
a contradiction. So, $v_{d+1}\in A_d\cup M_d$.
Similarly, we have $v_{d+\ell-1}\in A_0\cup M_1$.

Set $P^d:v_{d-1}v_dv_{d+1}$ if $v_{d-1}v_{d+1}\notin E(G)$,
or $P^d:v_{d-1}v_{d+1}$ if $v_{d-1}v_{d+1}\in E(G)$, respectively.
Similarly, set $P^0:v_{d+\ell-1}v_0v_1$ if $v_{d+\ell-1}v_1\notin E(G)$,
or $P^d:v_{d+\ell-1}v_1$ if $v_{d+\ell-1}v_1\in E(G)$, respectively.
Finally, set $C:v_1Pv_{d-1}P^dv_{d+1}P'v_{d+\ell-1}P^0v_1$.
Then, $C$ is a cycle of length at least $2d-2$.

{\bf Claim 1 \cite{P.1}:} The cycle $C$ is chordless.

\begin{proof}
Suppose, to the contrary, that $v_iv_j\in E(G)$ is a chord in $C$.
Since both $P$ and $P'$ are chordless, we can choose the notation
such that $1\leq i\leq d-1$ and $d+1\leq j\leq d+\ell-1$.
Since $v_j\in V(P')$, we have $v_j\notin B_c$ by the definition of $P'$,
implying $i=d-1$ and $v_j\in M_d$,
or, symmetrically, $i=1$ and $v_j\in M_1$.
This implies that in the first case, $v_j=v_{d+1}$;
in the second case, $v_j=v_{d+\ell-1}$; and
in both cases, $v_iv_j\in E(C)$ by the definition of $C$.
Thus, $C$ is chordless.
\end{proof}

{\bf Claim 2:} $\ell\leq d+2$.

\begin{proof}
Assume that $\ell\geq d+3$, and let $Q$ be a shortest $(v_0,v_{d+2})$-path
in $G$. Then, $|E(Q)|\leq d$ (since $diam(G)=d$).
And, since $\ell\geq d+3$ and $P'$ is shortest in $H=G-B_c$,
we have $dist_H(v_0,v_{d+2})\geq d+1.$
So, $Q$ must contain a vertex from $B_c$.
Let $w$ be the last vertex of $Q$ in $B_c$,
and let $w^-$ and $w^+$ be its predecessor and successor on $Q$, respectively
(they exist since $v_{d+2}\notin B_c$ by the definition of $P'$).
By Lemma \ref{lem1}, $w^+$ is at distance at most 1 from $P$.
Since clearly $w^+\notin \{v_0,v_d\}$,
either $w^+v_0\in E(G)$ or $w^+v_d\in E(G)$.
If $w^+v_0\in E(G)$, then $v_0w^+Qv_{d+2}$ is a $(v_0,v_{d+2})$-path
shorter than $Q$, a contradiction. Thus, $w^+v_d\in E(G)$.
Now, $w^+\neq v_{d+2}$ since $P'$ is chordless, implying $dist_G(v_0,w^+)\leq d-1$.
On the other hand, $dist_G(v_0,w^+)\geq d-1$;
otherwise, $v_0Qw^+v_d$ is a $(v_0,v_d)$-path of length at most $d-1$,
contradicting the fact that $P$ is a diameter path.
Hence, $dist_G(v_0,w^+)=d-1$, implying that $dist_G(v_0,w)=d-2$ and $w^+v_{d+2}\in E(Q)$.
Since $v_{d+2},v_{d+3}\in R$, we have $G[\{v_{d+3},v_{d+2},v_d,w^+,w,w^-\}]\simeq S_{1,2,2}$,
a contradiction. Hence, $\ell\leq d+2$.
\end{proof}

{\bf Claim 3:} $C\cup N_G(C)=V(G)$, and every vertex in $V(G)\backslash V(C)$
has at least 2 neighbors in $C$.

\begin{proof}
Suppose that a vertex $x\in V(G)\backslash V(C)$
at distance 1 from $C$ has exactly one neighbor in $C$,
and set $N_C(x)=\{y\}$. And let $z_1, z_2\in N_C^2(x)$, and
let $z_1', z_2'\in N_C^3(x)$.
Then, we have $G[\{x,y,z_1, z_2,z_1', z_2'\}]\simeq S_{1,2,2}$,
a contradiction.

Secondly, suppose, to the contrary, that $z\in V(G)$
is at distance 2 from $C$, and $y$ is a neighbor of $z$
at distance 1 from $C$. Then, $dist_G(z,P)\geq 2$;
otherwise, $y=v_0$ or $v_d$, without loss of generality,
we assume $y=v_0$. Then, $v_1$ must be adjacent to $v_{d+l-1}$,
and thus, $G[\{z,y,v_1,v_2,v_{d+l-1},v_{d+l-2}\}]\simeq N$,
a contradiction.
Hence, $z\in R$. If $y\in R$, then $y$ is not adjacent to any of $v_1,v_2$
and $v_3$. If $y\notin R$, then we have $y\in X$.
Without loss of generality, we assume $y\in X_2$.
Then, $y$ is not adjacent to any of $v_1,v_2$ and $v_3$.
Moreover, from above we know that $y$ has at least 2 neighbors in $C$.
Let $x_1, x_2\in N_C(y)$ be the vertices closest to $v_1$
and $v_3$, respectively. And, let $x_1'$ and $x_2'$
be their neighbors that are closer to $v_1$
and $v_3$ in $C$, respectively.
Then, $G[\{y,z,x_1,x_2,x_1', x_2'\}]\simeq S_{1,2,2}$
if $x_1x_2\notin E(G)$, or
$G[\{y,z,x_1,x_2,x_1', x_2'\}]\simeq N$
if $x_1x_2\in E(G)$, respectively.
Thus, $C$ is a dominating set of $G$.
\end{proof}

By Claims 1 and 2, we know that $C$
is a chordless cycle of length at most $d+l\leq 2d+2$.
Now, we demonstrate a rainbow vertex-coloring of $G$
that uses at most $d+1$ colors.
Relabel $C=x_1x_2\dots x_kx_{k+1}(=x_1),\ 8\leq
2d-2 \leq k\leq 2d+2$.
Then, we assign color $i$ to the vertex $x_i$
if $1\leq i \leq \lceil\frac{k}{2}\rceil$
and assign color $i-\lceil\frac{k}{2}\rceil$
to the vertex $x_i$ if $\lceil\frac{k}{2}\rceil< i \leq k$.
And, we color the remaining vertices arbitrarily.
We can show that this vertex-coloring can make $G$ rainbow vertex-connected.
From Theorem \ref{thm1} and Claim 3, we know that under this vertex-coloring,
pairs in $C\times V(G)$ are rainbow vertex-connected.
And, for each vertex $z\in N_G(C)$, we strength the result of Claim 3 that
$z$ has at least two neighbors colored differently
in $C$. Otherwise, we suppose that $z_1$ and $z_2$
are the only two neighbors of $z$ having the same color
in $C$. From the vertex-coloring, we know that
$dist_C(z_1,z_2)=\lfloor\frac{k}{2}\rfloor \geq 4$.
Then, we can easily find an induced $S_{1,2,2}$, a contradiction.
So, for a pair $(x,y) \in N_G(C)\times N_G(C)$,
we can find a vertex $x'\in N_C(x)$
and a vertex $y'\in N_C(y)$ such that
$x'$ and $y'$ are colored differently.
Since there exists a vertex-rainbow path $P$ connecting
$x'$ and $y'$ and the internal vertices
of $P$ are colored differently from $x'$ and $y'$,
the path $xx'Py'y$ vertex-rainbow connects $x$ and $y$.
Hence, $rvc(G)\leq d+1$.

Up to now, the proof of Theorem 8 is complete.
\end{proof}

Combining Proposition 2 with Theorems 7 and 8, we get
Theorem 6.


\end{document}